\newtheorem{theorem}{Theorem}[section]         
\newtheorem{corollary}[theorem]{Corollary}
\newtheorem{proposition}[theorem]{Proposition}
\newtheorem*{main theorem}{Main Theorem}
\theoremstyle{remark}       
\theoremstyle{definition}  
\newtheorem{definition}[theorem]{Definition}
\def\N{\mathbb{N}}
\def\Z{\mathbb{Z}}
\begin{document}
\title{Computations and Observations on Congruence Covering Systems} 
\author{Raj Agrawal, \quad Prarthana Bhatia, \quad Kratik Gupta, \quad Powers Lamb, \\ Andrew Lott, \quad Alex Rice, \quad Christine Rose Ward}
 
\begin{abstract} A \textit{covering system} is a collection of integer congruences such that every integer satisfies at least one congruence in the collection. A covering system is called \textit{distinct} if all of its moduli are distinct. An expansive literature has developed on covering systems since their introduction by Erd\H{o}s. Here we provide a full classification of distinct covering systems with at most ten moduli, which we group together based on two forms of equivalence. As a consequence, we determine the minimum cardinality of a distinct covering system with all moduli exceeding $2$, which is $11$.
\end{abstract}

\address{Department of Mathematics, Millsaps College, Jackson, MS 39210}
\email{agrawr@millsaps.edu} 
\email{bhatip@millsaps.edu}
\email{guptak1@millsaps.edu}  
\email{lambps@millsaps.edu}

\email{riceaj@millsaps.edu}
 
\email{wardcr@millsaps.edu}

\address{Department of Mathematics, University of Georgia, Athens, GA 30602}

\email{andrew.lott@uga.edu}  

\maketitle 
\setlength{\parskip}{5pt}   
 
\section{Introduction} 

It may seem a rather uninspiring fact that every integer is congruent to either $0$ or $1$ modulo $2$. However, if we impose additional requirements on the residues or moduli, the game of accounting for all integers with a collection of congruences becomes much more interesting, leading to the following family of definitions. 

\begin{definition}  For $a,m\in \Z$ with $m\geq 2$, the \textit{congruence class} $a (\text{mod }m)$ is the set of all integers congruent to $a$ modulo $m$.  A \textit{system of congruences} is a collection of congruence classes $ \{r_1 (\text{mod }m_1), \dots,r_k (\text{mod }m_k)\}$. Such a collection is called a \textit{covering system} if every integer $n$ satisfies $n \equiv r_i \ (\text{mod }m_i)$ for some $1\leq i \leq k$. A covering system is called \textit{distinct} if all the moduli are distinct, and \textit{minimal}  if all of the congruence classes are needed to cover the integers. In other words, in a minimal covering system, if one were to remove any one of the congruence classes, the remaining classes would \textit{not} form a covering system.\end{definition}

Covering systems were introduced by Erd\H{o}s \cite{erdos} as a component of his proof of a conjecture of Romanoff that there exists an arithmetic progression of odd numbers, none of which take the form $2^k+p$ for $k\in \N$ and $p$ prime. Specifically, his proof utilized the distinct covering system \begin{equation} \label{origCS} \{0(\text{mod }2), \ 0(\text{mod }3), \ 1(\text{mod }4), \ 3(\text{mod }8), \ 7(\text{mod }12), \ 23(\text{mod }24) \}.\end{equation}  Inspired by a possible generalization of his proof, Erd\H{o}s conjectured that there exist distinct covering systems with arbitrarily large minimum modulus, which became a coveted open problem. Nielsen \cite{nielsen} discovered a distinct covering system with minimum modulus $40$, and was the first to entertain in writing the possibility of a negative resolution to Erd\H{o}s's conjecture. To date, the largest known minimum modulus of a distinct covering system is 42, discovered by Owens \cite{Owens}. Nielsen's suspicion was proven reality by Hough \cite{hough} in 2015, who showed that the minimum modulus of a distinct covering system is at most $10^{16}$. This upper bound has since been lowered all the way to $616000$ in work of Balister, Bollobas, Morris, Sahadrabudhe, and Tiba \cite{BBMST}.  

Another famous question, due to Erd\H{o}s and Selfridge, is the existence or nonexistence of a distinct covering system with all odd moduli. This question remains open, but for recent progress the interested reader can refer to \cite{BBMST}, \cite{harrington}, \cite{HHM}, and \cite{HSW}. Here we have only scratched the surface of the massive covering system literature, focusing on recent results. For a more complete survey and list of references, particularly regarding less recent work, refer to \cite{PS} and \cite{zen}. 

The covering system \eqref{origCS} can be modified by replacing the final three congruences with two, yielding  \begin{equation} \label{5CS} \{0(\text{mod }2), \ 0(\text{mod }3), \ 1(\text{mod }4), \ 1(\text{mod }6), \ 11(\text{mod }12)\}. \end{equation} It is noted throughout the literature that \eqref{5CS} is a distinct covering system of minimum cardinality. In Section \ref{obs}, we provide explicit case analysis to confirm this assertion, and also address some follow-up questions.  These discussions lead us toward two natural forms of equivalence for distinct covering systems, which we use in our later computational classification efforts. 
 
In Section \ref{compsec}, we follow techniques developed by Jenkin and Simpson \cite{JS}, who found distinct covering systems with all composite moduli and minimum cardinality (which turns out to be $20$), to extend our reach with algorithms written in Python. Specifically, we classify all distinct covering systems with at most ten moduli, and group them together based on the two forms of equivalence discussed in Section \ref{obs}. In an abridged presentation of our findings, we include representatives of all equivalence classes of distinct minimal covering systems with at most seven moduli, as well a list of the sets of eight moduli that yield distinct minimal covering systems, with counts of how many equivalence classes arise from each set. We also provide summary data for $k=9,10$, and include a link to our full lists and code for the interested reader.

A notable finding in our classification is that all distinct covering systems with at most ten moduli have minimum modulus $2$. When making his aforementioned conjecture on the minimum modulus of distinct covering systems, Erd\H{o}s \cite{erdos} provided a distinct covering system with minimum modulus $3$, which utilizied $14$ moduli, the divisors of $120$ that are greater than $2$. In \cite{erdos2}, he guessed that this system had minimum cardinality amongst distinct covering systems whose moduli are all greater than $2$, but this was found to be incorrect by Krukenberg \cite{Kruk}, whose thesis included the $11$-modulus distinct covering system \begin{equation}\label{krukcs}  \{[2, 3], \ [0, 4], \ [1, 6], \ [2, 8], \ [0, 9], \ [3, 12], \ [6, 16], \ [3, 18], \ [6, 24], \ [33, 36], \ [46, 48]\}. \end{equation} Here and for the remainder of the paper we use the shorthand notation $[r,m]$ for the congruence class $r(\text{mod }m)$.  Our classification efforts combine with \eqref{krukcs} to yield the following conclusion.

\begin{proposition} \label{mainprop} The minimum cardinality of a distinct covering system with all moduli exceeding $2$ is $11$.
\end{proposition} 

Building from \cite{Kruk}, Dalton and Trifonov \cite{DT} have recently investigated the minimum least common multiple of a distinct covering system with a given minimum modulus. However, this does not necessarily correspond to distinct covering systems of minimal cardinality, and Proposition \ref{mainprop} is, to our knowledge, the first result of its specific type. We conclude with a complete characterization of distinct covering systems with exactly $11$ moduli, all exceeding $2$, all of which have the same set of moduli as \eqref{krukcs}.
 

\section{Preliminaries} \label{obs} 

We begin with some standard facts that are helpful in determining when a system of congruences is or is not a covering system. 

\begin{proposition}\label{lcmprop} A system of congruences $S=\{r_1(\textnormal{mod }m_1),\dots,r_k(\textnormal{mod }m_k)\}$ is a covering system if and only if it covers a member of every congruence class modulo $M=\textnormal{lcm}(m_1,\dots,m_k)$. 
\end{proposition}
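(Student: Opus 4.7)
The plan is to prove both directions of the biconditional, with the forward direction being essentially trivial and the reverse direction hinging on a single divisibility observation.

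For the forward ($\Rightarrow$) direction, I would simply note that if $S$ is a covering system, then by definition every integer is covered, so in particular at least one representative of each residue class modulo $M$ is covered. This requires no work beyond unwinding the definition.

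For the reverse ($\Leftarrow$) direction, the key step is the observation that membership in a congruence class $r_i(\text{mod } m_i)$ is invariant under shifts by $M$, since $m_i \mid M$ for each $i$. Concretely, if some integer $n$ satisfies $n \equiv r_i \pmod{m_i}$, then for any $t \in \Z$ we have $n + tM \equiv n \equiv r_i \pmod{m_i}$, so the entire residue class of $n$ modulo $M$ lies in the congruence class $r_i(\text{mod } m_i)$. Given an arbitrary integer $n$, I would pick the representative $n' \in \{0,1,\dots,M-1\}$ with $n \equiv n' \pmod M$; by hypothesis $n'$ is covered by some $r_i(\text{mod }m_i)$, and then the invariance observation shows $n$ is covered by the same congruence.

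There is no significant obstacle here. The only subtle point worth being explicit about is that ``covers a member of every congruence class modulo $M$'' must be interpreted as: for every residue $a \in \{0,1,\dots,M-1\}$, there exists some integer $n \equiv a \pmod M$ that lies in $\bigcup_{i=1}^{k} r_i(\text{mod }m_i)$. Once this is stated, the argument above is a one-line verification. I would keep the whole proof to a short paragraph.
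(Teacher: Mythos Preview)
Your proposal is correct and follows essentially the same approach as the paper: the forward direction is immediate, and the reverse direction uses the division algorithm to reduce an arbitrary integer to a representative modulo $M$, together with the observation that $m_i \mid M$ makes membership in each $r_i(\textnormal{mod }m_i)$ invariant under shifts by $M$. The only minor wrinkle is that your line ``by hypothesis $n'$ is covered'' technically overshoots the hypothesis (which only guarantees \emph{some} member of the class is covered), but your invariance observation immediately upgrades that to $n'$ itself, so the argument is complete.
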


The following definition and proposition are particularly helpful when ruling out a set of moduli from potentially producing a covering system.

\begin{definition} For a system of congruences $S=\{r_1(\textnormal{mod }m_1),\dots,r_k(\textnormal{mod }m_k)\}$, let $R(S)=\sum_{i=1}\frac{1}{m_i}$.

\end{definition}

\begin{proposition}\label{rec1} If $S$ is a covering system, then $R(S)\geq 1$, with equality holding if and only if $S$ is exact. 
\end{proposition}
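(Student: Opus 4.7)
The plan is to count residues modulo $M=\textnormal{lcm}(m_1,\dots,m_k)$ with multiplicity and to invoke Proposition \ref{lcmprop} to reduce the covering condition to a statement about the finite set $\{0,1,\dots,M-1\}$. Each congruence class $r_i(\textnormal{mod }m_i)$ meets this set in exactly $M/m_i$ residues, because $m_i \mid M$. Thus the total number of residues mod $M$ covered by $S$, counted with multiplicity across the $k$ congruences, is
\[ \sum_{i=1}^{k} \frac{M}{m_i} = M\cdot R(S). \]

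Next I would use Proposition \ref{lcmprop} to say that if $S$ covers $\Z$, then every one of the $M$ residues mod $M$ appears in at least one of the classes, so the multiset count above is at least $M$. Dividing by $M$ yields $R(S)\geq 1$. This gives the inequality.

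For the equality clause, I would analyze when the multiset count $M\cdot R(S)$ equals $M$: precisely when every residue mod $M$ is covered exactly once, i.e., when the classes $r_i(\textnormal{mod }m_i)$ partition $\Z/M\Z$, which lifts to a partition of $\Z$. This is the definition of an exact covering system (pairwise disjoint congruence classes whose union is $\Z$). Conversely, if $S$ is exact, the classes partition $\Z$, so in particular they partition $\{0,1,\dots,M-1\}$, forcing $M\cdot R(S)=M$.

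There is no real obstacle here; the only point requiring a line of care is the observation that the congruence class $r_i(\textnormal{mod }m_i)$ contributes exactly $M/m_i$ residues mod $M$, which follows because its intersection with any interval of $M$ consecutive integers has size $M/m_i$ when $m_i \mid M$. The equivalence between ``pairwise disjoint as subsets of $\Z$'' and ``pairwise disjoint after reducing mod $M$'' is immediate since each $m_i$ divides $M$, so the proof is essentially a clean double count.
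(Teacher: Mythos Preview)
Your proposal is correct and follows essentially the same approach as the paper: both arguments count residues modulo $M=\textnormal{lcm}(m_1,\dots,m_k)$, noting that each class $r_i(\textnormal{mod }m_i)$ contributes exactly $M/m_i$ residues, so the total with multiplicity is $M\cdot R(S)$, which must be at least $M$ if $S$ covers, with equality precisely when the classes are pairwise disjoint. Your explicit invocation of Proposition~\ref{lcmprop} is a harmless elaboration of what the paper leaves implicit.
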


\subsection{Distinct Covering Systems}

As mentioned in the introduction, a wide variety of surveys, articles, and books (see \cite{JS}, \cite{nielsen}, and \cite{PS} for just a few examples) mention that \eqref{5CS}, or another covering system with the same set of moduli, is a distinct covering system of minimum cardinality. Presumably the verification of this fact has been consistently left as a pleasing exercise for the reader, which we carry out below after developing some useful notation.

\begin{definition} For $k\in \N$, we let $\mathcal{C}_k$ denote the collection of all distinct minimal covering systems with exactly $k$ moduli. 
\end{definition}

\begin{proposition} \label{case} The collection $\mathcal{C}_k$ is empty for $k\leq 4$. 
\end{proposition}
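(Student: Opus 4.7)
I handle each case $k \in \{1, 2, 3, 4\}$ separately, using Proposition \ref{rec1} together with a parity reduction for $k = 4$. The cases $k = 1, 2$ are immediate: a single congruence modulo $m \geq 2$ misses $r + 1$, and for $k = 2$, Proposition \ref{rec1} combined with $2 \leq m_1 < m_2$ forces $m_1 = 2$ (else $R(S) \leq 2/3 < 1$), then $1/m_2 \geq 1/2$ gives $m_2 \leq 2$, contradicting $m_2 > m_1$.

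For $k = 3$, Proposition \ref{rec1} and distinctness restrict the modulus set to $\{2, 3, 4\}$, $\{2, 3, 5\}$, or $\{2, 3, 6\}$. In each case a parity argument suffices: $r_1 \pmod 2$ covers residues of one parity modulo the LCM, and a short count shows the remaining two classes cover strictly fewer residues of the opposite parity than needed.

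For $k = 4$, Proposition \ref{rec1} forces $m_1 = 2$ (since $1/3 + 1/4 + 1/5 + 1/6 < 1$). Without loss of generality $r_1 \pmod 2$ covers the even integers. Identifying odd integers with $\Z$ via $n \mapsto (n - 1)/2$, each remaining class $r_i \pmod{m_i}$ either transforms into a class modulo $m'_i$ (with $m'_i = m_i$ if $m_i$ is odd, or $m'_i = m_i/2$ if $m_i$ is even and $r_i$ is odd) or contributes nothing (if $m_i$ is even and $r_i$ is even), yielding a covering of $\Z$ by at most three reduced classes. Distinctness of the original moduli together with Proposition \ref{rec1} applied to the reduced system implies that all three reduced classes must be present. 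If the reduced moduli are distinct, minimizing yields a distinct minimal covering with at most three moduli, contradicting the already-proven $\mathcal{C}_{k'} = \emptyset$ for $k' \leq 3$. Otherwise some pair coincides, say $m'_i = m'_j = m'$; since $m_i \neq m_j$, exactly one of these two equals $m'$ (and is odd) while the other equals $2m'$, so $m' \geq 3$ is odd. The inequality $2/m' + 1/m'_\ell \geq 1$ on the third reduced modulus $m'_\ell$ (with $m'_\ell \geq 2$ and $m'_\ell \neq m'$) then forces $m' = 3$ and $m'_\ell = 2$, yielding the modulus set $\{2, 3, 4, 6\}$. A direct check modulo $12$ then confirms no configuration of $\{2, 3, 4, 6\}$ covers $\Z$.

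The principal obstacle is the $k = 4$ analysis: the parity reduction cleanly reduces most situations to earlier cases, but the non-distinct reduced-moduli subcase requires correctly identifying $\{2, 3, 4, 6\}$ as the sole surviving candidate and then dispatching it by explicit check.
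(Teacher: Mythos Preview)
Your argument is correct, and it takes a genuinely different route from the paper's proof. The paper handles $k=4$ by brute case analysis on $m_2$: it splits into $m_2\geq 5$, $m_2=4$, $m_2=3$, and within each case enumerates the few surviving possibilities for $(m_3,m_4)$, eliminating each by the Chinese remainder theorem and a residue count. Your approach is more structural: after forcing $m_1=2$, you pass to the odd integers via $n\mapsto (n-1)/2$, obtaining a covering of $\Z$ by at most three reduced classes, and then invoke the already-established emptiness of $\mathcal{C}_{k'}$ for $k'\leq 3$ when the reduced moduli are distinct. This reduction is essentially a partial inverse of the map $\delta$ the paper introduces \emph{after} this proposition, so your argument anticipates and exploits that structure. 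The paper's method is more self-contained and requires no bookkeeping about how moduli transform; yours is cleaner conceptually and explains \emph{why} the unique leftover candidate is $\{2,3,4,6\}$ rather than discovering it by exhaustion.

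One small point to tighten in your non-distinct subcase: you assert $m'_\ell\neq m'$ without justification. This does hold, but it needs a line: if $m'_\ell=m'$ as well, then applying your ``one odd, one even'' dichotomy to the pair $\{m_i,m_\ell\}$ (or $\{m_j,m_\ell\}$) forces $m_\ell\in\{m',2m'\}=\{m_i,m_j\}$, contradicting distinctness of the original moduli. With that sentence added, the deduction $m'=3$, $m'_\ell=2$, hence moduli $\{2,3,4,6\}$, is fully justified, and your final check modulo $12$ finishes the proof.
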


\begin{proof} Suppose $S=\{r_1(\textnormal{mod }m_1),\dots,r_k(\textnormal{mod }m_k)\}$ with $m_1<\cdots < m_k$ is a distinct minimal covering system. We begin by quickly ruling out $k\leq 3$.  By Proposition \ref{rec1}, since $1/2+1/4+1/5<1$, the only candidates for $k=3$ have $m_1=2$, $m_2=3$. However, by the Chinese remainder theorem, choosing $r_1,r_2$ leaves two missing classes modulo $6$, which cannot be covered by a single class modulo $m\geq 4$.  

\noindent Now we consider $k=4$. By Proposition \ref{rec1}, since $1/3+1/4+1/5+1/6<1$, we must have $m_1=2$. We perform case analysis as follows:

\noindent \textbf{Case 1:} $m_2\geq 5$.

\begin{enumerate}[(a)]

\item  $m_4\geq 8$. Impossible by Proposition \ref{rec1} as $1/2+1/5+1/6+1/8<1$.

\item  $(m_2,m_3,m_4)=(5,6,7)$. Choosing $r_1,r_3$ leaves exactly two uncovered classes modulo $6$. Then, by the Chinese remainder theorem, choosing $r_2$ leaves eight missing classes modulo $30$, which cannot be covered by a single class modulo $7$. 

\end{enumerate}

\noindent \textbf{Case 2:} $m_2= 4$. The choice $r_1,r_2$ leaves one uncovered class modulo $4$.

\begin{enumerate}[(a)]

\item $m_3\geq 8$. Impossible by Proposition \ref{rec1} as $1/2+1/4+1/8+1/9<1$.

\item $m_3\in\{5,7\}$. By the Chinese remainder theorem, choosing $r_3$ leaves either four missing classes modulo $20$ or six missing classes modulo $28$, which cannot be covered by a single class modulo $m\geq 6$. 

\item $m_3=6$. Choosing $r_3$ leaves two uncovered classes modulo $12$, which cannot be covered by a single class modulo $m\geq 7$.

\end{enumerate}

\noindent \textbf{Case 3:} $m_2= 3$. By the Chinese remainder theorem, choosing $r_1,r_2$ leaves two missing classes modulo $6$.

\begin{enumerate}[(a)]

\item $m_3\geq 6$. Impossible since $1/6+1/7<1/3$, so the last two classes cannot cover two classes modulo $6$.

\item  $m_3=5$. By the Chinese remainder theorem, choosing $r_3$ leaves eight missing classes modulo $30$, which cannot be covered by a single class modulo $m\geq 6$. 

\item $m_3=4$. Choosing $r_3$ leaves two uncovered classes modulo $12$, which are incongruent modulo $3$ and hence incongruent modulo $6$, so they cannot be covered by a single class modulo $m\geq 5$. \end{enumerate}\end{proof}
Since \eqref{5CS} does indeed have minimum cardinality amongst distinct minimal covering systems, it is natural to ask whether it is, in any sense, unique in this regard. In the most literal sense, we quickly see this to not be the case, as one can add any fixed number to each residue, or take the negative of every residue, yielding a total of $24$ technically different distinct covering systems with  moduli $\{2,3,4,6,12\}$.
 
The following definition and proposition generalize this observation that a covering system immediately spawns a family of related covering systems via simple transformations. The proposition follows from a more general result of Jones and White \cite{JW}.

\begin{definition} For a system of congruences $S=\{r_1(\textnormal{mod }m_1),\dots,r_k(\textnormal{mod }m_k)\}$ and $a,n\in \Z$, we define $$aS+n=\{ar_1+n(\text{mod }m_1),\dots,ar_k+n(\text{mod }m_k)\}.$$
\end{definition}

\begin{proposition}\label{affine} Suppose $S=\{r_1(\textnormal{mod }m_1),\dots,r_k(\textnormal{mod }m_k)\}$ is a system of congruences, and let $M=\textnormal{lcm}(m_1,\dots,m_k)$. Suppose further that $a,n\in \Z$ with $\gcd(a,M)=1$. Then, $S$ is a covering system if and only if $aS+n$ is a covering system. 
\end{proposition}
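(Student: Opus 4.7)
The plan is to reduce the statement to a direct bijection argument on $\Z$ modulo $M$. First I would observe that the moduli are the same on both sides, so
\[
\textnormal{lcm}(m_1,\dots,m_k)=M
\]
is also the lcm of the moduli of $aS+n$, and in particular for each $1\leq i\leq k$ we have $m_i\mid M$, which together with $\gcd(a,M)=1$ gives $\gcd(a,m_i)=1$. Hence $a$ is invertible modulo each $m_i$; let $a^*$ be an integer with $a^*a\equiv 1\pmod{M}$ (and hence $a^*a\equiv 1\pmod{m_i}$ for each $i$).

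Next I would translate "covered by $aS+n$" into a statement about $S$. For any integer $z$ and any index $i$,
\[
z\equiv ar_i+n\pmod{m_i}\iff a^*(z-n)\equiv r_i\pmod{m_i},
\]
using that $a^*$ is an inverse of $a$ modulo $m_i$. Quantifying over $i$, this says that $z$ is covered by $aS+n$ if and only if the single integer $a^*(z-n)$ is covered by $S$. The map $\varphi\colon\Z\to\Z$ defined by $\varphi(z)=a^*(z-n)$ is a bijection (its inverse is $w\mapsto aw+n$), so $aS+n$ covers every integer if and only if $S$ covers every integer, which is exactly the claimed equivalence.

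The argument is almost entirely bookkeeping; the only thing requiring any care is the invertibility step, and it is precisely here that the hypothesis $\gcd(a,M)=1$ is used (through the divisibility $m_i\mid M$). Alternatively, one could invoke Proposition \ref{lcmprop} to reduce to finitely many residues modulo $M$ and observe that $\varphi$ restricts to a bijection of $\Z/M\Z$, but this extra step is not needed since the bijection on $\Z$ already does the job cleanly in both directions.
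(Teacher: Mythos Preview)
Your proof is correct and follows essentially the same approach as the paper: both pick an inverse $a^*$ (the paper calls it $b$) of $a$ modulo $M$, reduce modulo each $m_i$, and show that $z$ is covered by $aS+n$ iff $a^*(z-n)$ is covered by $S$. The only cosmetic difference is that the paper proves one direction and then deduces the converse from the identity $S=a^*(aS+n)-a^*n$, whereas you get both directions at once by noting that $z\mapsto a^*(z-n)$ is a bijection on $\Z$.
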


\noindent In particular, Proposition \ref{affine} induces an equivalence relation on $\mathcal{C}_k$ where $S$ is equivalent to $aS+n$ for all $a,n\in \Z$ with $\gcd(a,M)=1$, which we refer to as \textit{affine equivalence}.

Since we know that the minimum cardinality of a distinct minimal covering system is $5$, a natural question is whether distinct minimal covering systems exist for \textit{all} larger cardinalities. Note that the insistence that the covering systems be minimal prevents us from simply tacking on additional congruences to existing covering systems. The following definition and proposition demonstrate a quick and elementary way of using an existing distinct minimal covering system to produce a new distinct minimal covering system with exactly one additional congruence. 

\begin{definition}For a system of congruences $S=\{r_1(\textnormal{mod }m_1),\dots,r_k(\textnormal{mod }m_k)\}$, we define $$\delta(S)=\{1(\text{mod }2),2r_1(\text{mod }2m_1),\dots,2r_k(\text{mod }2m_k)\}.$$ A system $S\in \mathcal{C}_k$ is called \textit{$\delta$-primitive} if $S,S+1 \notin\delta(\mathcal{C}_{k-1})$.
\end{definition}

\begin{proposition} \label{k} For a system of congruences $S$, $\delta(S)\in \mathcal{C}_{k+1}$ if and only if $S\in \mathcal{C}_k$. 
\end{proposition}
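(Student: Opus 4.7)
The plan is to analyze $\delta(S)$ by splitting $\Z$ according to parity. The congruence $1\,(\text{mod }2)$ in $\delta(S)$ covers exactly the odd integers, while every congruence of the form $2r_i\,(\text{mod }2m_i)$ covers only even integers. Via the bijection $n \mapsto 2n$ between $\Z$ and the even integers, the condition $n \equiv r_i \pmod{m_i}$ is equivalent to $2n \equiv 2r_i \pmod{2m_i}$. So the collection $\{2r_i\,(\text{mod }2m_i)\}_{i=1}^k$ covers all even integers if and only if $\{r_i\,(\text{mod }m_i)\}_{i=1}^k$ covers all of $\Z$. This single observation is what drives all three parts (distinct moduli, covering, minimality).

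First I would check distinctness of moduli. The moduli of $\delta(S)$ are $2, 2m_1, \ldots, 2m_k$, and since each $m_i \geq 2$ we have $2m_i \geq 4 > 2$, so $\delta(S)$ has distinct moduli if and only if $m_1, \ldots, m_k$ are distinct. Next I would use the parity decomposition to conclude that $\delta(S)$ covers $\Z$ if and only if $S$ does: odd integers are covered by $1\,(\text{mod }2)$ regardless, and covering of the evens by the remaining congruences of $\delta(S)$ is equivalent, via the map $n\mapsto 2n$, to $S$ covering $\Z$.

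For the minimality portion, I would check each congruence in $\delta(S)$ in turn. The congruence $1\,(\text{mod }2)$ is essential because no other congruence in $\delta(S)$ covers any odd integer. For each $1 \leq i \leq k$, the congruence $2r_i\,(\text{mod }2m_i)$ is redundant in $\delta(S)$ if and only if every even integer $2n$ it covers is covered by some other congruence; since the only other congruences covering evens are $\{2r_j\,(\text{mod }2m_j)\}_{j\ne i}$, the parity correspondence shows this is equivalent to $r_i\,(\text{mod }m_i)$ being redundant in $S$. Hence $\delta(S)$ is minimal iff $S$ is minimal.

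Combining the three equivalences gives $\delta(S)\in\mathcal{C}_{k+1} \iff S\in\mathcal{C}_k$. There is no real obstacle here; the only thing to be careful about is that $m_i \geq 2$, which is built into the definition of a congruence class and ensures $2 \notin \{2m_1,\ldots,2m_k\}$ so that the new modulus $2$ does not clash with any $2m_i$.
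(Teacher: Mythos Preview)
Your proof is correct and follows essentially the same approach as the paper: both split $\Z$ by parity, observe that the doubled congruences cover only even integers while $1\,(\text{mod }2)$ covers exactly the odds, and use the correspondence $n\mapsto 2n$ to transfer the covering, distinctness, and minimality properties between $S$ and $\delta(S)$. Your explicit mention of the bijection $n\mapsto 2n$ is a slight notational addition, but the argument is the same.
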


\begin{proof} Suppose $S=\{r_1(\textnormal{mod }m_1),\dots,r_k(\textnormal{mod }m_k)\}$ is a system of congruences. To say that $S\in \mathcal{C}_k$ is to say three things: $S$ is a covering system, $S$ has exactly $k$ congruences with all distinct moduli, and if any of the congruences were removed from $S$, it would no longer be a covering system. We will show that each of these properties hold if and only if the analogous properties hold for $\delta(\mathcal{C}_k)$, with $k$ replaced by $k+1$. 

\noindent Since $S'=\{2r_{1} (\text{mod }2m_{1}), \dots,  2r_k (\text{mod }2m_{1})\}$ consists entirely of even integers, while $1(\text{mod }2)$ is the set of all odd integers, we see that $\delta(S)=\{1(\text{mod }2),  2r_1  (\text{mod }2m_{1}), \dots,  2r_k (\text{mod }2m_{1})\}$ is a covering system if and only if   $S'$ covers all even integers, which is equivalent to $S$ covering all integers. 

\noindent This equivalence can also be applied to compare $S \setminus \{r_{i}(\text{mod }m_i)\}$ and $\delta(S)\setminus \{2r_{i}(\text{mod }2m_i)\}$ for each $1\leq i \leq k$. The former fails to be a covering system if and only if the latter does (and $\delta(S)\setminus \{1(\text{mod }2)\}$ always fails to be a covering system), hence $S$ is minimal if and only if $\delta(S)$ is minimal.

\noindent Finally, for distinctness, $m_i\neq m_j$ if and only if $2m_{i} \neq 2m_{j}$, and $m_1,\dots,m_k>1$ implies $2m_1,\dots,2m_k>2$, so the $k$ moduli of $S$ are all distinct if and only if the $k+1$ moduli of $\delta(S)$ are all distinct. \end{proof}

By starting with any member of $\mathcal{C}_5$, say \eqref{5CS}, and iteratively applying $\delta$, we have the following corollary.

\begin{corollary} The collection $\mathcal{C}_k$ is nonempty for all $k\geq 5$.
\end{corollary}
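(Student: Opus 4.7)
The plan is a one-line induction on $k$, powered by Proposition \ref{k}. The base case $k=5$ is handled by exhibiting the covering system \eqref{5CS}: it is a distinct minimal covering system with exactly five moduli (the moduli $2,3,4,6,12$ are distinct, the fact that it covers $\Z$ can be verified via Proposition \ref{lcmprop} by checking all residues modulo $12$, and minimality is seen by observing that removing any single congruence leaves at least one residue class modulo $12$ uncovered). Hence $\eqref{5CS} \in \mathcal{C}_5$, so $\mathcal{C}_5 \neq \emptyset$.

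For the inductive step, suppose $\mathcal{C}_k \neq \emptyset$ for some $k \geq 5$, and fix any $S \in \mathcal{C}_k$. By Proposition \ref{k}, $\delta(S) \in \mathcal{C}_{k+1}$, so $\mathcal{C}_{k+1} \neq \emptyset$. By induction, $\mathcal{C}_k$ is nonempty for all $k \geq 5$. There is no real obstacle here; the content of the corollary is essentially bundled into Proposition \ref{k}, and the only task is to invoke it starting from the explicit five-element example \eqref{5CS}.
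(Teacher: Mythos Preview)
Your proof is correct and matches the paper's own argument essentially verbatim: start with the explicit example \eqref{5CS} in $\mathcal{C}_5$ and iteratively apply $\delta$ via Proposition \ref{k}. You have simply spelled out the induction that the paper leaves implicit.
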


The map $\delta$ induces an equivalence relation on $\mathcal{C}=\bigcup_{k=5}^{\infty}\mathcal{C}_k$, where two covering systems are equivalent if one can be obtained from the other by applying $\delta$ some number of times. In other words, the $\delta$-equivalence classes are determined by the $\delta$-primitive covering systems, which each spawn an infinite family of distinct minimal covering systems via iteration of $\delta$.

\section{Computations} \label{compsec}

Following the lead of Jenkin and Simpson \cite{JS}, we conduct a computation to classify all distinct minimal covering systems with at most $10$ moduli. We call a list of moduli $\{m_1,\ldots,m_k\}$ \textit{good} if there exist residues $\{r_1,\ldots,r_k\}$ such that $\{r_1(\textnormal{mod }m_1),\dots,r_k(\textnormal{mod }m_k)\}$ is a covering system and \textit{bad} otherwise. Our first goal is to create a manageable list which contains all lists of good moduli with cardinality at most $10$. The following proposition from \cite{JS} is crucial to our approach:
\begin{proposition}\label{lcmlist}
If $S=\{r_1(\textnormal{mod }m_1),\dots,r_k(\textnormal{mod }m_k)\}$ is a minimal covering system and $\prod_{i=1}^t p_i^{a_i}$ is the prime factorization of $\textnormal{lcm}(m_1,\dots,m_k)$, then 
\[\sum_{i=1}^t a_i(p_i-1)+1\leq k.\]
\end{proposition}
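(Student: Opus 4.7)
My plan is to proceed by strong induction on $\Omega(M) = \sum_i a_i$, the total number of prime divisors of $M = \textnormal{lcm}(m_1, \ldots, m_k)$ counted with multiplicity. The base case $\Omega(M) = 0$ forces $M = 1$, so every $m_i = 1$, and minimality forces $k = 1$; the bound $0 + 1 \leq 1$ holds.

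For the inductive step, I would fix a prime $p$ with $v_p(M) = a \geq 1$, write $M = p^a N$ with $\gcd(p, N) = 1$, and aim to construct a minimal covering system $S'$ satisfying $|S'| \leq |S| - (p-1)$ and $\textnormal{lcm}(S') \mid M/p$. Applying the inductive hypothesis to $S'$ then yields $|S| \geq |S'| + (p-1) \geq 1 + (a-1)(p-1) + \sum_{p_i \neq p} a_i(p_i - 1) + (p-1) = 1 + \sum_i a_i(p_i - 1)$, as required.

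To construct $S'$, I would partition $S = L \sqcup H$ according to whether $v_p(m_i) < a$ or $v_p(m_i) = a$. Call a residue class $s \pmod{p^{a-1}N}$ \emph{lost} if no single congruence in $L$ covers its entire arithmetic progression, and let $U$ denote the set of lost classes. Minimality forces $U$ to be nonempty: otherwise $L$ would already cover $\mathbb{Z}$ and minimality would force $H = \emptyset$, contradicting $v_p(M) = a$. For each $s \in U$, the $p$ lifts of $s$ to classes $\pmod{p^a N}$ must each be covered by some element of $H$, and because a single high can cover at most one lift per $s$, at least $p$ distinct highs are involved for each lost $s$. If I can identify $s^* \in U$ whose $p$ covering highs each have all of their essential ``private'' coverage (integers whose only cover in $S$ is that high) entirely within the lifts of $s^*$, then removing these $p$ highs and inserting the single new congruence $s^* \pmod{p^{a-1}N}$ produces the required $S'$: coverage is preserved because the new low covers the same integers as the removed highs on the lifts of $s^*$, and minimality survives because the dedication condition ensures no other private integer is affected.

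The principal obstacle is verifying that a suitable $s^*$ exists for some prime $p \mid M$. In the single-prime case $M = p^a$, each high has modulus exactly $p^a$ and hence is compatible with only one residue class below, so the dedication condition holds automatically and the induction runs cleanly. In the multi-prime case a high can be compatible with many residues below, so the existence of $s^*$ requires a bipartite matching / Hall-style analysis relating highs to lost lifts, possibly combined with varying the choice of prime $p$ when the first attempt fails (a small example with $M = 30$ shows that the reduction can fail for one prime while succeeding for another). Formalizing this combinatorial existence claim is the delicate crux of the proof.
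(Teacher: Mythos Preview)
The paper does not prove this proposition; it quotes it from Jenkin--Simpson \cite{JS} and uses it as a black box. So there is no ``paper's own proof'' to compare against, only the original reference.

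Your proposal, however, has a concrete structural error that is separate from the acknowledged gap. You claim that removing $p$ highs and inserting one congruence modulo $M/p$ yields a system $S'$ with $\textnormal{lcm}(S')\mid M/p$. This is false whenever $|H|>p$: the highs you did \emph{not} remove still have moduli divisible by $p^{a}$, so they force $\textnormal{lcm}(S')=M$. A minimal example already in the single-prime case is $M=4$, $S=\{[0,4],[1,4],[2,4],[3,4]\}$. Here $L=\emptyset$, every class mod $2$ is lost, and for $s^{*}=0$ your construction gives $S'=\{[0,2],[1,4],[3,4]\}$, which has $\textnormal{lcm}(S')=4\nmid 2$. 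The dedication condition holds perfectly in this example, so it does not rescue the claim. Consequently the induction on $\Omega(M)$ does not move: $\Omega(\textnormal{lcm}(S'))=\Omega(M)$ and the inductive hypothesis is unavailable.

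One might hope to repair this by inducting on $|S|$ instead, since $|S'|=|S|-(p-1)<|S|$ regardless. But then you must control $\textnormal{lcm}(S')$ from \emph{below} to get a useful bound, and if you pass to a minimal subcover $S''\subseteq S'$ (as you would need to, since the remaining highs can become redundant once $[s^{*},M/p]$ is inserted), the lcm can collapse and the inequality $(p-1)\geq\sum_i(a_i-a_i'')(p_i-1)$ required to close the induction need not hold. So the gap is not just the existence of a dedicated $s^{*}$; the reduction you describe does not in general produce a smaller instance of the same problem. The Jenkin--Simpson argument proceeds differently, and you should consult \cite{JS} directly.
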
 
Since we are only concerned with covering systems of cardinality at most $10$, this propsition reduces our search to a finite list of potential least common multiples, say $L$. As previously known, and as shown explicitly in Proposition \ref{case}, there are no distinct covering systems of cardinality less than $5$, so our goal is to produce a list which contains all good lists of moduli with cardinality at least $5$ and at most $10$, each of which has least common multiple in $L$. To do this, we employ an algorithm which takes a list of integers $M$ as input, and builds a list that contains, for each $m\in M$, all subsets of distinct divisors of $m$ of cardinality $5\leq i \leq 10$. To optimize efficiency,  we run the algorithm with $M=L'$, where $L'\subseteq L$ is a subset of minimal size with the property that every element of $L$ divides an element of $L'$. Now that we have an initial list of potentially good lists of moduli, we are ready to further whittle down the search. Specifically, we use a powerful algorithm outlined in Section 3 of \cite{JS}, which takes as input a list of moduli and returns `bad' or `don't know'.  Although the algorithm cannot always detect whether a list is bad, it does so often enough to greatly reduce the search space. 

We check each of the remaining lists of moduli in a more brutal manner.  For each list $m_1<\cdots<m_k$, we create a list of systems $\{r_1(\textnormal{mod }m_1),\dots,r_k(\textnormal{mod }m_k)\}$ with the following property: for $1\leq i<j\leq k$, if $m_i\mid m_j$, then $r_j\not\equiv r_i\pmod {m_i}$. Otherwise, $r_j(\text{mod }m_j)$ would be entirely contained in $r_i(\text{mod }m_i)$, and the system would not be minimal. The systems are also chosen to guarantee that there is at least one representative from every affine equivalence class. 

Equipped with a list of potential systems for a specific list of moduli, we check whether each system covers $\mathbb{Z}$. We accomplish this by exploiting Proposition \ref{lcmprop} and checking whether the system covers $\{0,1,\dots,M-1\}$, where $M$ is the least common multiple of the moduli.


Left with a list of covering systems, we use straightforward algorithms to check each for minimality and reduce the list so that there are unique representatives for each   $\delta$-primitive affine equivalence class. The following table contains an abridged version of the results of our computations, while our complete lists of data, and all annotated code, are available at \url{https://github.com/andrewlott99/Kinnaird22_CoveringSystems}.

\begin{table}[H]

\caption{A single representative from each $\delta$-primitive affine equivalence class in $\mathcal{C}_k$ for $k=5,6,7$, a complete list of all sets of moduli for $\mathcal{C}_8$, and summary data for $\mathcal{C}_9$ and $\mathcal{C}_{10}$.}
\label{CSdata}

\begin{tabular}{|| l | l  l ||}
\hline
\hline
$k$ & \multicolumn{2}{| c ||}{$\delta$-primitive affine equivalence classes in $\mathcal{C}_k$}\\
\Xhline{0.8pt}
$5$ & $\{[1, 2], [1, 3], [2, 4], [2, 6], [0, 12]\}$ & \ \\
\hline

$6$ & $\{[1, 2], [1, 3], [2, 4], [2, 6], [4, 8], [0, 24]\}$, & $\{ [1, 2], [1, 3], [2, 4], [4, 8], [8, 12], [0, 24]\}$, \\ \ & $\{[1, 2], [1, 3], [2, 6], [4, 8], [6, 12], [0, 24] \}$ & \ \\
\hline 

$7$ & \multicolumn{2}{| l ||}{15 equivalence classes, 11 sets of moduli:} \\ 

\ & $\{[1, 2], [1, 3], [2, 4], [2, 6], [4, 8], [8, 16], [0, 48] \}$, & $\{[1, 2], [1, 3], [2, 4], [2, 6], [3, 9], [6, 18], [0, 36] \}$, \\ 

\ & $\{[1, 2], [1, 3], [2, 4], [2, 6], [6, 9], [12, 18], [0, 36] \}$, & $\{[1, 2], [1, 3], [2, 4], [2, 6], [8, 16], [12, 24], [0, 48] \}$, \\ 

\ & $\{[1, 2], [1, 3], [2, 4], [4, 8], [8, 12], [8, 16], [0, 48] \}$, & $\{ [1, 2], [1, 3], [2, 4], [4, 8], [8, 16], [8, 24], [0, 48]\}$, \\ 

\ & $\{[1, 2], [1, 3], [2, 4], [3, 9], [8, 12], [6, 18], [0, 36] \}$, & $\{[1, 2], [1, 3], [2, 4], [6, 9], [8, 12], [12, 18], [0, 36]\}$,\\

\ & $\{[1, 2], [1, 3], [2, 4], [8, 12], [8, 16], [12, 24], [0, 48] \}$, & $\{[[1, 2], [1, 3], [2, 6], [4, 8], [6, 12], [8, 16], [0, 48] \}$, \\

 \ & $\{ [1, 2], [1, 3], [2, 6], [3, 9], [6, 12], [6, 18], [0, 36]\}$, & $\{ [1, 2], [1, 3], [2, 6], [6, 9], [6, 12], [12, 18], [0, 36]\}$, \\ 

\ & $\{[1, 2], [1, 3], [2, 6], [6, 12], [8, 16], [12, 24], [0, 48] \}$, & $\{ [1, 2], [2, 4], [2, 6], [3, 9], [4, 12], [6, 18], [0, 36]\}$, \\ 

\ & $\{[1, 2], [2, 4], [2, 6], [6, 9], [4, 12], [12, 18], [0, 36] \}$  & \ \\

\hline

$8$ &  \multicolumn{2}{| l ||}{85 equivalence classes, one or two with each of the following 50 sets of moduli:} \\ 

\ & \multicolumn{2}{| l ||}{ $\{2, 3, 4, 6, 8, 9, 18, 72 \}$,  $\{ 2, 3, 4, 6, 8, 9, 36, 72\}$,  $\{ 2, 3, 4, 6, 8, 16, 32, 96\}$,  $\{2, 3, 4, 6, 8, 18, 36, 72 \}$,} \\

\ & \multicolumn{2}{| l ||}{ $\{2, 3, 4, 6, 8, 32, 48, 96 \}$,  $\{2, 3, 4, 6, 9, 18, 24, 72 \}$,  $\{2, 3, 4, 6, 9, 24, 36, 72 \}$,  $\{ 2, 3, 4, 6, 16, 24, 32, 96\}$,}\\

\ & \multicolumn{2}{| l ||}{ $\{2, 3, 4, 6, 18, 24, 36, 72 \}$,  $\{ 2, 3, 4, 6, 24, 32, 48, 96\}$, \ $\{2, 3, 4, 8, 9, 12, 18, 72 \}$,  $\{2, 3, 4, 8, 9, 12, 36, 72 \}$,} \\

\ & \multicolumn{2}{| l ||}{ $\{2, 3, 4, 8, 9, 18, 24, 36 \}$,  $\{2, 3, 4, 8, 9, 18, 24, 72 \}$, $\{ 2, 3, 4, 8, 9, 24, 36, 72\}$,  $\{ 2, 3, 4, 8, 12, 16, 32, 96\}$,} \\

\ & \multicolumn{2}{| l ||}{ $\{ 2, 3, 4, 8, 12, 18, 36, 72\}$,  $\{2, 3, 4, 8, 12, 32, 48, 96 \}$,  $\{2, 3, 4, 8, 16, 24, 32, 96 \}$,  $\{2, 3, 4, 8, 16, 32, 48, 96 \}$,} \\

\ & \multicolumn{2}{| l ||}{ $\{ 2, 3, 4, 8, 18, 24, 36, 72\}$,  $\{ 2, 3, 4, 8, 24, 32, 48, 96\}$,  $\{2, 3, 4, 9, 12, 18, 24, 72 \}$,  $\{2, 3, 4, 9, 12, 24, 36, 72 \}$,} \\

\ & \multicolumn{2}{| l ||}{ $\{2, 3, 4, 12, 16, 24, 32, 96 \}$,  $\{ 2, 3, 4, 12, 18, 24, 36, 72\}$,  $\{2, 3, 4, 12, 24, 32, 48, 96 \}$, $\{ 2, 3, 6, 8, 9, 12, 18, 72\}$,} \\

\ & \multicolumn{2}{| l ||}{ $\{ 2, 3, 6, 8, 9, 12, 36, 72\}$, $\{2, 3, 6, 8, 9, 18, 24, 36 \}$,  $\{ 2, 3, 6, 8, 9, 18, 36, 72\}$,  $\{2, 3, 6, 8, 12, 16, 32, 96 \}$,}\\

\ & \multicolumn{2}{| l ||}{ $\{2, 3, 6, 8, 12, 18, 36, 72 \}$,  $\{2, 3, 6, 8, 12, 32, 48, 96 \}$,  $\{2, 3, 6, 9, 12, 18, 24, 72 \}$,  $\{ 2, 3, 6, 9, 12, 24, 36, 72\}$,} \\

\ & \multicolumn{2}{| l ||}{ $\{ 2, 3, 6, 9, 18, 24, 36, 72\}$,  $\{2, 3, 6, 12, 16, 24, 32, 96 \}$, $\{ 2, 3, 6, 12, 18, 24, 36, 72\}$,  $\{2, 3, 6, 12, 24, 32, 48, 96 \}$,} \\
 
\ & \multicolumn{2}{| l ||}{ $\{2, 4, 6, 8, 9, 12, 18, 72 \}$,  $\{2, 4, 6, 8, 9, 12, 36, 72 \}$,  $\{2, 4, 6, 8, 9, 18, 24, 36 \}$,  $\{2, 4, 6, 8, 9, 18, 24, 72 \}$,} \\

\ & \multicolumn{2}{| l ||}{ $\{2, 4, 6, 8, 9, 24, 36, 72 \}$,  $\{2, 4, 6, 9, 12, 18, 24, 72 \}$,  $\{2, 4, 6, 9, 12, 24, 36, 72 \}$,  $\{2, 4, 8, 9, 12, 18, 24, 36 \}$,} \\

\ & \multicolumn{2}{| l ||}{ $\{2, 4, 8, 9, 12, 18, 24, 72 \}$,  $\{2, 4, 8, 9, 12, 24, 36, 72 \}$} \\
 
\hline

$9$ &  \multicolumn{2}{| l ||}{585 equivalence classes, 248 sets of moduli:} \\ 

\ & \multicolumn{2}{| l ||}{$\bullet$ $1,2,4$ or $6$ equivalence classes for each set of moduli} \\

\ & \multicolumn{2}{| l ||}{$\bullet$ All systems have minimum modulus $2$} \\

\ & \multicolumn{2}{| l ||}{$\bullet$ Maximum moduli: $30,48,60,72,80,108,144,192$} \\

\ & \multicolumn{2}{| l ||}{$\bullet$ Moduli have no prime factors greater than $5$} \\

\hline

$10$ &  \multicolumn{2}{| l ||}{6267 equivalence classes, 1652 sets of moduli} \\ 

\ & \multicolumn{2}{| l ||}{$\bullet$ $1,2,4,6,8,12,$ or $18$ equivalence classes for each set of moduli} \\

\ & \multicolumn{2}{| l ||}{$\bullet$ All systems have minimum modulus $2$} \\

\ & \multicolumn{2}{| l ||}{$\bullet$ Maximum moduli: $30,40,45,48,60,72,80,90,96,108,120,144,160,192,216,288,384$} \\

\ & \multicolumn{2}{| l ||}{$\bullet$ Moduli have no prime factors greater than $5$} \\

\hline
\end{tabular} 

\end{table}

Table \ref{CSdata} completes the proof of Proposition \ref{mainprop}, since all of the classified covering systems have minimum modulus $2$, and \eqref{krukcs} is a distinct covering system with minimum modulus $3$ and exactly $11$ moduli. 

To go slightly further, we investigate the extent to which \eqref{krukcs} is unique in this regard. Again using Proposition \ref{lcmlist} and the aforementioned Jenkin-Simpson algorithm, the moduli of \eqref{krukcs} is the only list of $11$ moduli, all exceeding $2$, that survives the pruning process. From there, we run a brute force search with SageMath to classify all distinct covering systems with this specific list of moduli. Up to affine equivalence, we can translate any such system to contain $0(\text{mod }9)$ and $0(\text{mod }16)$. With appropriate scaling we can also fix the congruence $1(\text{mod }3)$, and by minimality no congruence class in the system can contain any other, which further narrows the search. The SageMath computation produces eight covering systems, four pairs related by multiplication by $7$, and we conclude with the following strengthening of Proposition \ref{mainprop}.
\begin{proposition} There are exactly four affine equivalence classes of distinct covering systems with at most $11$ moduli, all exceeding $2$, represented respectively by $$\{[1,3],[2,4],[5,6],[4,8],[0,9],[3,12],[0,16]\} \cup \begin{cases} \{[3,18],[0,24], [33,36], [8,48]\}, \\ \{[3,18], [8,24], [33, 36], [24,48] \},  \\ \{[15,18], [0,24], [21,36], [8,48] \}, \\ \{[15,18], [8,24], [21,36], [24,48] \}  \end{cases}.$$
\end{proposition}


\noindent \textbf{Acknowledgements:}  This research was initiated during the Summer 2022 Kinnaird Institute Research Experience at Millsaps College. All authors were supported during the summer by the Kinnaird Endowment, gifted to the Millsaps College Department of Mathematics. At the time of completion, all authors except Alex Rice were Millsaps College undergraduate students.

\end{document}